\documentclass[11pt]{amsart}
\usepackage[totalwidth=14cm,totalheight=21cm]{geometry}
\usepackage{mathrsfs}

\usepackage{amssymb}

\usepackage{tipa}

\usepackage{upgreek}

\newtheorem{lemma}{Lemma}[section]

\newtheorem{remark}{Remark}[section]

\newtheorem{theorem}{Theorem}[section]

\begin{document}
\title{Remarks on the critical catenoid and an\\
 overdetermined eigenvalue problem in $\Bbb S^2$}
%\date{\today}

\author{Rabah SOUAM}
\address{Institut de Math{\'e}matiques de Jussieu-Paris Rive Gauche,  CNRS  UMR 7586, Universit{\'e} Paris Cit{\'e}, B{\^a}timent SoFe Germain
 Case 7012,  75205 Paris Cedex 13, France.}
\email{rabah.souam@imj-prg.fr}

\newtheorem{proposition}{Proposition}[section]

\begin{abstract} It was recently proved that an embedded minimal annulus with free boundary in the unit ball $\Bbb B^3$ of the Euclidean space is, up to a rotation, the critical catenoid. We prove that an  immersed, possibly branched, free boundary minimal annulus in $\Bbb B^3$ with embedded boundary components is necessarily free of branch points and is embedded. This shows the uniqueness of the critical catenoid holds under these weaker hypotheses. We then apply these results to prove that if $\Omega$ is a sufficiently regular  annulus in $\Bbb S^2$ for which there exists a function satisfying the overdetermined eigenvalue problem
\begin{equation*}
\begin{cases} \Delta u+ 2u = 0 \quad \text{in} \quad \Omega\\
\qquad \quad u=0  \quad \text{on}\quad\partial\Omega\\
\quad\,\,\,\, |\nabla u|=1 \quad \text{on}\quad \partial\Omega.
\end{cases}
\end{equation*}
 then $\Omega$ is, up to rotation,  a rotational  annulus with equatorial symmetry.
\end{abstract}

\maketitle

%\keywords{constant mean curvature, stability}

%\subjclass{}

\section{Introduction }
\medskip
Let $\Bbb B^3$ denote the closed unit ball centered at the origin in the Euclidean space $\Bbb R^3$ and  $ \Bbb S^2=\partial\Bbb  B^3$ its boundary.  Given   a  smooth compact and connected surface  $\Sigma$ with non empty boundary 
$\partial \Sigma,$  an immersion   $F: \Sigma  \longrightarrow \Bbb B^3,$ which is smooth up to the boundary,  is said to be a minimal immersion with free boundary if it is minimal, 
$F(\partial \Sigma) \subset \partial \Bbb B^3$ and $F(\Sigma) $ meets orthogonally $\partial \Bbb B^3$ along $F(\partial \Sigma).$ 

The simplest examples of such surfaces are the equatorial disk and the so called critical catenoid. The critical catenoid is the surface of revolution obtained by rotating about the $z-$axis the graph of the function
$$ a \cosh \frac{z}{a},$$
over the interval 
$$\left[ -\frac{1}{\cosh L}, \frac{1}{\cosh L}\right],$$
where $L$ is the unique positive solution to the equation $L=\coth L,$ and $a=1/ (L\cosh L).$

\medskip

Several works have been devoted during the last years to study the existence and uniqueness of minimal surfaces with free boundary in $\Bbb B^3.$ Many existence results, for various topological types of surfaces, have been obtained using different methods. On the contrary, uniqueness results proved to be more difficult to obtain. In his pioneering work, Nitsche \cite{N} proved that the equatorial disk is the only such surface having the topology of the disk and claimed without proof that the critical catenoid is the only {\it embedded } example having the topology of an annulus. This became a conjecture in this topic and was settled very recently \cite{CG, PZ}.

\medskip
In this paper, we will first prove, in section \ref{sec:embed},  that an immersed, possibly branched, minimal annulus with free boundary in $\Bbb R^3$ is necessarily free of branch points and is embedded provided each of its boundary components is embedded. Thus the uniqueness of the critical catenoid holds under this weaker hypothesis, showing a stronger rigidity.  
It is noteworthy  that Fernandez-Hauswirth-Mira \cite{FHM},  and Kapouleas-McGrath \cite{KM} proved the existence of immersed, non embedded, minimal annuli with free boundary in $\Bbb B^3.$ 
\medskip

As a second result, we will apply the above uniqueness result to prove, in section \ref{sec:overdetermined},  a uniqueness result  for annuli in the unit sphere $\Bbb S^2$ which have solutions for an overdetermined eigenvalue problem. This originated from our  paper \cite{S} where we observed the 
strong link, through the support function,  between free boundary minimal surface of genus zero in $\Bbb B^3$  and domains $\Omega$ in $\Bbb S^2$ possessing a smooth $u$ verifying

\begin{equation}\label{system}
\begin{cases} \Delta u+ 2u = 0 \quad \text{in} \quad \Omega\\
\qquad \quad u=0  \quad \text{on}\quad\partial\Omega\\
\quad\,\,\,\, |\nabla u|=1 \quad \text{on}\quad \partial\Omega.
\end{cases}
\end{equation}

\medskip

Indeed, the existence of  a function $u$ satisfying  the previous conditions gives rise, through the map 
$X_u:= \nabla u + u\,\text{Id},$ to a branched minimal immersion with free boundary in $\Bbb B^3.$ The simplest examples of 
domains having the above property are  the hemispheres.  As a consequence of this correspondence, we obtained in \cite{S}, using (a slight extension of)  Nitsche's uniqueness result,  that a simply connected domain $\Omega$ in $\Bbb S^2$ satisfying the above hypothesis is necessarily a hemisphere. Another simple example of such a domain is the image by the Gauss map of the critical catenoid.  We conjectured 
in \cite{S} that these are, up to rotations,  the only  domains having this property. This was recently disproved by Hines, Colesar and McGrath \cite{HCM} who constructed counterexamples given by domains having $m$ boundary components, for any large enough $m.$ Nevertheless, in the same spirit we used Nitsche's theorem, 
using  the uniqueness of the critical catenoid, we are able to prove that   an annulus in $\Bbb S^2$ supporting  a solution to the overdetermined eigenvalue problem (\ref{system}) is necessarily, up to a rotation, an annulus of revolution with an equatorial symmetry, namely the image by the Gauss map  of the critical catenoid.

\medskip
 As another application, we also show that the critical catenoid is, up to rotations,  the only immersed free boundary minimal surface in $\Bbb B^3$ with injective Gauss map.
\bigskip

The author  started to prepare this preprint when the paper by Chodosh-Giaconna  \cite{CG} appeared on arXiv and  later became aware of the paper by Parise-Zhu \cite{PZ} also  establishing the uniqueness of the critical catenoid and proving a theorem similar to Theorem \ref{annulus} below,  under the additional hypothesis that $u>0$ in $\Omega.$ 
He  is grateful to Eric Toubiana for a careful reading of a first version of this paper and for useful remarks. 

\bigskip

\section{A Sufficient condition for embeddedness of minimal annuli with free boundary}\label{sec:embed}

\medskip
We start recalling some basic facts about branched minimal surfaces in $\Bbb R^3.$  A branched (or generalized) minimal surface in $\Bbb R^3$ is the data of a Riemann surface $\mathcal R,$ possibly with boundary, and a non-constant  harmonic mapping
$$F : \mathcal R \longrightarrow \Bbb R^3$$
which is almost conformal, that is,
$$\langle F_z, F_z\rangle =0$$ 
where $z$ is any  local conformal coordinate on $\mathcal R.$ 

The  zeros of $F_z$ are called branch points. Interior branch points are isolated  and $F$ is a  conformal minimal  immersion on $\mathcal R\setminus \{F_z=0\}. $  If   $F(\mathcal  R)$ is not contained in a horizontal plane, then $F$ can be expressed, up to translation by $F(p_0), \, p_0\in \mathcal R,$  by its Weierstrass representation   as follows
\begin{equation}\label{weierstrass}
F(p)=\Re \int_{p_0}^p \left(\frac{1}{2}(1-g^2), \frac{i}{2}(1+g^2), g\right)\omega,\quad p\in\mathcal R,
\end{equation}
where $g$ is a meromorphic function and $\omega$ a holomorphic one form on $\mathcal R$  satisfying the condition that whenever $g$ has a pole of order $m,$ $\omega$ has a zero of order $\geq 2m.$
 The branch points of $F$ are those where $g$ has a pole of order $m$ (including the case $m=0$) and $\omega$  a zero of order $>2m.$ Denoting by $\sigma$ the stereographic projection of $\Bbb S^2$ to the equatorial plane from the north pole, the map $N=\sigma^{-1}\circ g$ is smooth on $\mathcal R$ and is a unit normal field to the immersion, away from the branch points, that is, its Gauss map. 
 
Fix a conformal coordinate $z$ on $\mathcal R$ and write $\omega(z)=f(z)dz,$ the induced metric on $\mathcal R,$  which is singular at the branch points, is given by :
 $$ds^2 = \frac{1}{4}|f|^2(1+|g|^2)^2|dz|^2 $$
 and the curvature on the regular set  :
 \begin{equation}\label{curvature}
 K=-\left(\frac{4|g^\prime|}{|f|(1+|g|^2)^2}\right)^2.
 \end{equation}

\medskip

%%%%%%%%%%%%%%%
%%%%%%%%%%%%%%%
%%%%%%%%%%%%%%%

Before stating our first result, we  observe the following fact that we will use later.

\begin{lemma}\label{sign}
Let $\Omega\subsetneq \Bbb S^2$ be a smooth connected domain in $\Bbb S^2.$  Suppose $2$ is an eigenvalue of  the Laplacian with Dirichlet boundary condition on $\Omega,$   that is, there exists a  non identically zero  function $u\in\mathcal C^2(\Omega)\cap \mathcal C^0(\overline\Omega)$
verifying 
\begin{align*} \Delta u +2u &=0\qquad \text{in}\quad \Omega\\
u&=0 \qquad \text{on}\quad \partial\Omega.
\end{align*}
Then $2$ is the first eigenvalue of  the Laplacian with Dirichlet boundary condition on $\Omega$ and so $u$ does not vanish in the interior of $\Omega.$ 
\end{lemma}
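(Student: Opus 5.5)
The approach is to use the coordinate functions of $\Bbb R^3$, restricted to $\Bbb S^2$, which are exactly the eigenfunctions of the Laplacian on $\Bbb S^2$ with eigenvalue $2$. Suppose, for contradiction, that $2$ is not the first Dirichlet eigenvalue $\lambda_1(\Omega)$. Then $u$ is orthogonal to the first eigenfunction $\varphi_1>0$ and therefore changes sign. Let $D$ be a nodal domain of $u$, i.e.\ a connected component of $\{u\neq0\}$. Since $\lambda_1(\Omega)<2$ and $u$ has at least two nodal domains, we can choose $D\subset\Omega$ so that $\overline D$ is strictly smaller than $\overline\Omega$; more precisely, $\Omega\setminus \overline D$ has nonempty interior. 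On $D$, $u$ is a first Dirichlet eigenfunction with $\lambda_1(D)=2$.

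The key step is a comparison between $u$ on $D$ and a coordinate function $x_i$ whose nodal set is a great circle. Since $\Omega\neq\Bbb S^2$, pick a point $p\notin\overline\Omega$. I would rather use the following cleaner argument: domain monotonicity with hemispheres. The first Dirichlet eigenvalue of an open hemisphere equals $2$, and by strict domain monotonicity any connected domain strictly contained in a hemisphere has $\lambda_1>2$. So it suffices to show that some nodal domain $D$ lies in an open hemisphere; then $\lambda_1(D)>2$, contradicting $\lambda_1(D)=2$.

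To obtain this, I would apply a Sturm-type (Picone / Wronskian) argument directly. For any unit vector $a$, the function $\ell_a=\langle x,a\rangle$ satisfies $\Delta\ell_a+2\ell_a=0$. On a nodal domain $D$ with $u>0$, Green's formula gives
\begin{equation*}
0=\int_D(\ell_a\Delta u-u\Delta\ell_a)=\int_{\partial D}\ell_a\,\partial_\nu u .
\end{equation*}
Here $\partial_\nu u\le0$ on $\partial D$, and by Hopf's lemma $\partial_\nu u<0$ on the regular part of $\partial D$. This forces every great circle $\{\ell_a=0\}$ to meet $\partial D$ in such a way that $\ell_a$ changes sign along $\partial D$. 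Equivalently, no closed hemisphere contains $\partial D$ except with $\ell_a\equiv0$ on it. Thus $\partial D$ is not contained in any open hemisphere, and the same holds for every nodal domain.

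The main obstacle is turning this into a contradiction with the existence of two nodal domains. Two disjoint domains $D_1,D_2$ each satisfy $\lambda_1=2$ and have boundaries not lying in a hemisphere. Their union lies in $\Omega\subsetneq\Bbb S^2$. Then $\Bbb S^2\setminus\overline{D_1}$ contains $D_2$ together with points outside $\overline\Omega$, so some component $E$ of the complement of $\overline{D_1}$ is a domain with $\lambda_1(E)<2$ or $\lambda_1(E)\le 2$ strictly inside it. I would invoke the fact that $\lambda_1$ of a domain and the first eigenvalue of its complement cannot both be $\le 2$ unless both are hemispheres. This is the Friedland–Hayman / spherical Faber–Krahn type inequality: for complementary domains, $\lambda_1(D)\ge2$ fails on one side by symmetrization, since spherical caps of area $\ge 2\pi$ have $\lambda_1\le2$, with equality only for hemispheres. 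Applying it to $D_1$ and to the component of $\Bbb S^2\setminus\overline{D_1}$ containing $D_2$ and the exterior points gives strict inequalities, and hence a contradiction. Therefore $u$ has a single nodal domain, so $2=\lambda_1(\Omega)$ and $u$ does not vanish in $\Omega$.
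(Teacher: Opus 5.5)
Your argument does close, but only through its last step, and by a genuinely different route from the paper's.

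The paper never uses nodal domains. It extends normalized Dirichlet eigenfunctions $\varphi_1,\varphi_2$ of $\Omega$ by zero and picks a nontrivial combination $a_1\varphi_1+a_2\varphi_2$ with zero mean on $\Bbb S^2$. The min-max characterization of $2$ as the first nonzero eigenvalue of $\Bbb S^2$ then gives $\lambda_2^D(\Omega)\ge 2$. Equality is excluded because the extremal function would be a linear function vanishing on $\Bbb S^2\setminus\Omega$.

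Your route is Courant-type nodal reasoning plus spherical Faber--Krahn (Sperner) or Friedland--Hayman. It is cleanest stated via areas. Two nodal domains $D_1,D_2\subset\Omega$ would each have $\lambda_1(D_i)=2$. Caps of area $<2\pi$ have $\lambda_1>2$, so Faber--Krahn gives $|D_i|\ge 2\pi$. Hence $|\Omega|\ge 4\pi$, which is impossible for a smooth proper domain. This version needs neither the equality case nor any discussion of the components of $\Bbb S^2\setminus\overline{D_1}$.

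As you wrote it, your claim of ``strict inequalities'' assumes that the component of $\Bbb S^2\setminus\overline{D_1}$ containing $D_2$ also contains exterior points. That need not hold: a priori $D_2$ could be a disk enclosed by an annular $D_1$, and then that component is $D_2$ itself. The rigidity case of the fact you cite (both domains hemispheres) still produces a contradiction, but you must say so.

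Drop Steps~2--3. The Green identity with $\ell_a$ shows that no nodal domain can lie in an open hemisphere, so the reduction you propose in Step~2 can never be carried out. Nothing from Step~3 is used afterwards.

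As for what each approach buys:
\begin{itemize}
\item The paper's proof is elementary and self-contained. It uses only the spectrum of $\Bbb S^2$, gives $\lambda_2^D(\Omega)>2$ directly, and needs no nodal-domain theory.
\item Yours relies on a deep symmetrization theorem. In return it gives extra quantitative information: every nodal domain of a Dirichlet eigenfunction with eigenvalue at most $2$ has area at least $2\pi$.
\end{itemize}
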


\begin{proof} Let $\lambda_1^D(\Omega)<\lambda_2^D(\Omega)$ be the first and the second eigenvalues of the Laplacian 
with Dirichlet boundary condition on $\Omega.$ To prove the lemma  it is enough to show that $\lambda_2^D(\Omega)>2.$ This will follow from the monotonicity of the eigenvalues of the Laplacian with respect to the inclusion. More precisely, recall that the first non zero eigenvalue of the Laplacian on $\Bbb S^2$ 
is equal to 2 and by the minimax principle 
\begin{equation}\label{minimax} \inf_{{\varphi \in H^1(\Bbb S^2), \int_{\Bbb S^2}\varphi =0}}\,\,\frac{\int_{\Bbb S^2} |\nabla \varphi|^2}{\int_{\Bbb S^2}\varphi^2} =2.
\end{equation} 
 Moreover, if $\varphi\in H^1(\Bbb S^2)$ achieves the equality then $\varphi$ is an eigenfunction associated to the the eigenvalue 2, namely the restriction to $\Bbb S^2$ of a linear function on $\Bbb R^3.$ 
 
 Consider an eigenfunction $\varphi_1$ (resp. $\varphi_2$) associated to $\lambda^D_1(\Omega)$ (resp. to $\lambda^D_2(\Omega))$  with 
  $\int_{\Omega} \varphi_1^2=1$ (resp. $\int_{\Omega} \varphi_2^2=1$). Extending $\varphi_1$ and $\varphi_2$ by zero outside $\Omega,$ and keeping the same notation,  we get $\varphi_1, \varphi_2 \in H^1(\Bbb S^2).$ Now, we  can find  $a_1,a_2\in \Bbb R,$ not both equal to 0, so that 
 $$\int_{\Bbb S^2} a_1\varphi_1+a_2\varphi_2=0.$$
   By (\ref{minimax}) we have 
 $$2\leq \frac{a_1^2\int_{\Omega}|\nabla\varphi_1|^2 + a_2^2\int_{\Omega}|\nabla\varphi_2|^2}{a_1^2+a_2^2}= \frac{a_1^2\lambda^D_1(\Omega)+ a_2^2\lambda_2^D(\Omega)}{a_1^2+a_2^2}\leq \lambda_2^D(\Omega)$$
 Moreover, if we had $\lambda_2^D(\Omega)=2,$ then necessarily $a_1=0$ and $\varphi_2$ would achieve the equality in (\ref{minimax}) and thus would be an eigenfunction  of the Laplacian on $\Bbb S^2$ associated to the eigenvalue 2,  which is absurd since $\varphi_2$ vanishes on the complement of $\Omega.$ This shows that
  $\lambda_2^D(\Omega) >2.$
 \end{proof}

We will now prove our first  result as announced in the introduction.

\begin{theorem}\label{embedding} Let $\Sigma$  be an immersed minimal annulus, possibly with branch points,  with free boundary in $\Bbb B^3.$   Then $\Sigma$ is free of branch points. Moreover if each boundary component of $\Sigma$ is embedded then $\Sigma$ is embedded  and thus, by the uniqueness of the critical catenoid, $\Sigma$ is, up to a rotation,  the critical catenoid.
\end{theorem}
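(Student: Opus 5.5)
The plan is to work in conformal coordinates and use the Hopf differential. Uniformize $\Sigma$ as a flat annulus $A=[0,T]\times\Bbb S^1$ with coordinate $z=t+i\theta$, and let $F:A\to\Bbb B^3$ be the branched conformal harmonic map. Consider the Hopf differential $Q=\langle F_{zz},N\rangle\,dz^2$. In the Weierstrass data (\ref{weierstrass}) it equals, up to a constant factor, $g'f\,dz^2$, so it is holomorphic. The free boundary condition makes each boundary curve a line of curvature: $N$ is tangent to $\Bbb S^2$ there and $dN(\gamma')$ is parallel to $\gamma'$, as for the sphere. Hence $Q(\partial_\theta,\partial_\theta)$ is real on $\partial A$, and $Q=h(z)dz^2$ with $h$ holomorphic on $A$, $2\pi i$-periodic, and real on $\{t=0\}\cup\{t=T\}$. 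Schwarz reflection across both boundary lines together with periodicity makes $h$ a bounded doubly periodic entire function, so $h\equiv c\in\Bbb R$. If $c=0$, $\Sigma$ is totally umbilic, hence planar, and a planar free boundary surface is the equatorial disk, not an annulus. So $c\neq 0$.

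The absence of branch points then follows from a local order computation. Near an interior branch point, where $g$ has a pole of order $m\ge 0$ and $f$ a zero of order at least $2m+1$, the product $g'f$ vanishes. The order is $\ge 2m+1-(m+1)>0$ when $m\ge1$, and $f$ itself vanishes when $m=0$. This contradicts $Q=c\,dz^2$ with $c\ne0$. At a boundary branch point, $F$ is smooth up to the boundary and the same computation on a half disk shows that $Q$ extends continuously with a zero there, again impossible. So $F$ is an immersion. Moreover $K=-|Q|^2$ in the flat metric divided by the conformal factor, so $K<0$ everywhere and the Gauss map $N$ is a local diffeomorphism, indeed a local isometry from $(A,-K\,ds^2)$ to $\Bbb S^2$.

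For embeddedness I would use the support function $u=\langle F,N\rangle$. It vanishes on $\partial A$ by the free boundary condition, and the Jacobi equation $\Delta_\Sigma u+|A|^2u=0$ becomes $\Delta u+2u=0$ in the pulled-back metric $N^*g_{\Bbb S^2}$. The key step, and the one I expect to be the main obstacle, is to show that $N$ is injective, so that $\Omega=N(A)$ is an annulus in $\Bbb S^2$ and $u$ descends to a Dirichlet eigenfunction for the eigenvalue $2$ on $\Omega$.

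To attack this I would first use the embeddedness of the boundary components. On $\partial A$ the Gauss image is $N=\gamma\times\gamma'/|\gamma'|$, and I would show this is an embedded curve: its geodesic curvature is tied to the normal curvature of $\gamma$ in $\Sigma$, and $\gamma$ is an embedded line of curvature. Next I would use the local isometry $N$ together with Gauss--Bonnet, $\int_\Sigma K = -\int_{\partial\Sigma}\kappa_g$ expressed through $c$, to show that $N|_{\partial A}$ has winding degree $\pm1$ onto two disjoint embedded curves. A proper local diffeomorphism from the annulus onto the region they bound, of degree one on the boundary, is then a diffeomorphism.

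Once this is done, Lemma \ref{sign} gives that $u$ has a sign in the interior, say $u>0$, i.e.\ $\langle F,N\rangle>0$ on the interior of $\Sigma$. Then the radial projection $F/|F|:A\to\Bbb S^2$ is a local diffeomorphism, it is injective on each embedded boundary curve, and the images of the two boundary curves are disjoint by degree considerations. The same degree argument shows that the radial projection is injective, hence $F$ is an embedding. The uniqueness of the critical catenoid \cite{CG,PZ} then concludes.
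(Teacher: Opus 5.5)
Your branch-point argument is the paper's. The paper gets $z^2h\equiv a\in\Bbb R$ on the ring from $\Im(z^2h)=0$ on $\partial\mathcal A$ and the maximum principle, which is your reflection/Liouville argument written in the logarithmic coordinate. Your embeddedness plan also follows the paper's outline: injectivity of $N$, then Lemma~\ref{sign} applied to $\langle F,N\rangle\circ N^{-1}$, then the radial projection. The gap is in the two global injectivity steps, which is where the work lies.

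\textbf{Injectivity of $N$.} A minor point first. What makes $N|_{\Gamma_i}$ injective is a global fact you should invoke explicitly: $F(\Gamma_i)$ is an embedded closed curve in $\Bbb S^2$ with $|\kappa_g|=\sqrt{-K}\neq 0$, hence it is convex and lies in an open hemisphere \cite{F}. It is this convexity that makes its unit normal injective.

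The serious point is that Gauss--Bonnet ($\int_\Sigma K=-|\partial\Sigma|$) does not by itself show that $N(\Gamma_1)\cap N(\Gamma_2)=\emptyset$, nor that $N(\mathrm{int}\,A)$ avoids these curves. Your step ``a proper local diffeomorphism onto the region they bound'' presupposes exactly this. The missing idea is the capping argument:
\begin{itemize}
\item Since $N$ is a local diffeomorphism up to the boundary, a thin collar of $\Gamma_i$ is mapped to one side of the Jordan curve $N(\Gamma_i)$.
\item Glue to $A$ the complementary disk $D_i$ on the other side, and extend $N$ by the identity on $D_i$.
\item This gives a local homeomorphism from a topological sphere to $\Bbb S^2$, hence a covering, hence a homeomorphism.
\end{itemize}
Injectivity of $N$ and disjointness of the two boundary images then come out together. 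Once this side selection is in place, a Gauss--Bonnet area count could replace the covering argument to show the capped map has degree one. But the side selection itself is the essential step, and it is absent from your sketch.

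\textbf{The radial projection.} Your assertion that $X=F/|F|$ is a local diffeomorphism on $A$ is false along $\partial A$. There $\langle F,N\rangle=0$, and the free boundary condition gives $dF(\nu)=F$, so $dX(\nu)=0$. No degree argument can be run until you know how a collar of $\Gamma_i$ is mapped.

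The paper settles this with a second-order computation. Take a unit speed curve $\gamma$ with $\gamma(0)=p\in\Gamma_i$ and $\dot\gamma(0)=\nu(p)$. Then $\frac{d}{dt}\langle X(\gamma(t)),N(p)\rangle|_{t=0}=0$ and $\frac{d^2}{dt^2}\langle X(\gamma(t)),N(p)\rangle|_{t=0}=\text{II}(\nu,\nu)$. This is nonzero, and of constant sign along $\Gamma_i$, because $\nu$ is a principal direction and $K\neq 0$. So the collar folds onto one side of $F(\Gamma_i)$.

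The same capping argument applied to $X$ then gives injectivity of $X$, hence of $F$. The disjointness of $F(\Gamma_1)$ and $F(\Gamma_2)$ is an output of this argument, not something available beforehand ``by degree considerations.''
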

  
  \begin{proof}
  Recall that  any Riemann surface having the topology of a closed annulus is conformal to a ring 
 $\mathcal A=\{ z\in \Bbb C, r\leq |z|\leq 1\} $ in the complex plane $\Bbb C,$ for some $0<r<1.$  So the surface $\Sigma$ is 
  given by an  almost conformal non-constant harmonic map 
 $$F: \mathcal A\longrightarrow \Bbb B^3$$
 verifying $$F(\partial\mathcal A)\subset \partial\Bbb B^3$$
 and such that $F(\mathcal A)$ intersects $\partial\Bbb B^3$ orthogonally away from the branch points. 
 As, obviously, $F(\mathcal A)$ is not contained in a plane, $F$ is given,  in terms of its  Weierstrass data $(g,f),$ as in (\ref{weierstrass}) with 
 $\omega(z)=f(z) dz.$
 
 We will first prove that $F$ is free of branch points, following an argument we used in \cite{S}.  We let $\mathcal S$ be the  set of branch points of $F.$ Denote by II the {\it second fundamental form} of $F.$  It is a priori defined on the regular set  $\mathcal A\setminus\mathcal S,$ but since the Gauss map $N$ extends smoothly to all of $\mathcal A,$ so does  II  by setting
 $$\text{II}=-\langle dN, dF\rangle \quad \text{on}\quad \mathcal A.$$
 Note that II vanishes on $\mathcal S.$ 
 The  Hopf-function $h$  of $F$ is the smooth function defined on $\mathcal A $ by:
$$ h(z)= \frac{1}{2}(\text{II}(\partial_x, \partial_x)-\text{II}(\partial_y, \partial_y))- i\, \text{II}(\partial_x, \partial_y). $$

It is a well known fact that minimality  implies $h$ is holomorphic on $\mathcal A\setminus\mathcal S.$    The interior branch points being isolated, $h$ is holomorphic on int$(\mathcal A).$ Also, $\partial \mathcal A\setminus \mathcal S$ is an open subset of $\partial\mathcal A$ and, by the free boundary condition, each of its components is a  line of curvature. Therefore 
$$\text{II} (x\partial_x+y\partial_y, -y\partial_x+x\partial_y)=0\qquad \text{on} \quad \partial \mathcal A\setminus \mathcal S.$$
Since $\text{II}$ vanishes on $\mathcal S,$ we have 
$$\text{II}(x\partial_x+y\partial_y, -y\partial_x+x\partial_y)=0\qquad \text{on} \quad \partial \mathcal A.$$

This can be rewritten as follows:
$$ \Im  (z^2h(z)) = 0 \qquad \text{on}\quad \partial \mathcal A.$$

By the maximum principle,  the harmonic  function  $ \Im  (z^2h(z))$ is identically zero on $\mathcal A$ and hence $z^2h(z)$ is a real constant:  there exists $a\in \Bbb R$ so that 
$$ h(z)= \frac{a}{z^2} , \qquad z\in \mathcal A.$$

Note that $a\neq 0$ since, otherwise, $h$ would be identically zero and thus the immersion would be totally umbilical, that is, a spherical cap or a totally geodesic disk, which is excluded.

In terms of the Weierstrass data $(g,f)$ of the immersion $F$, we have (cf. \cite{O}): 
$$h= -fg^\prime$$
Therefore 
$$f g^\prime(z) = -\frac{a}{ z^2}\qquad z\in\mathcal A$$

This shows that $F$ has no branch points, that is, $F$ is an immersion on all of $\mathcal A.$  This also shows that $g$ can have only simple poles and  $g^\prime$ has no zeros. It follows that 
$g$ and hence its Gauss map $N:\mathcal A\longrightarrow \Bbb S^2$  is a local diffeomorphism and that the Gauss curvature $K$ of the surface never vanishes (cf. (\ref{curvature})). 

We suppose now that $F$ is an embedding when restricted to each boundary component of $\mathcal A.$ We will first  prove  the Gauss map $N$ is a diffeomorphsim onto its image $ N(\mathcal A).$  
Let  $\Gamma$ be a component of $\partial \mathcal A,\, \gamma$ a parameterization by arclength of $\Gamma$ and set 
 $\overline\gamma=F\circ \gamma.$  
 By the free boundary condition, $N$ is a unit normal in $\Bbb S^2$ to the curve  $F(\Gamma)$ and so
  $\langle \ddot {\overline\gamma}, N\rangle$ coincides, possibly up  to sign, with the geodesic curvature $\kappa_g$ of $F(\Gamma)$ in $\Bbb S^2.$ 
We know $\gamma$
is a line of curvature, so $|\kappa_g|= |\langle \ddot {\overline\gamma}, N\rangle| = |\text{II} (\dot\gamma,\dot\gamma)|= \sqrt{-K}\neq 0.$  Therefore $F(\Gamma)$ is a closed locally convex curve in $\Bbb S^2$ and as, by assumption, it is embedded, we conclude that 
it is contained in an open hemisphere and is globally convex (cf. \cite{F}).

Set $\partial\mathcal A=\Gamma_1\cup\Gamma_2$ with $\Gamma_1=\{z\in \Bbb C, |z|=r\}$ and $\Gamma_2=\{z\in \Bbb C, |z|=1\}.$   We have seen that   $F(\Gamma_i)$ is a convex curve contained in an open hemisphere, $i=1,2.$  Along $\Gamma_i,$ the unit normal $N$ to $F(\mathcal A)$ is a unit normal to $F(\Gamma_i)$ in $\Bbb S^2.$ As $F(\Gamma_i)$ is convex, we infer that $N$ is injective along $\Gamma_i.$ So  $N(\Gamma_i)$ separates $\Bbb S^2$ into 2 topological disks. Since  $N$ is a local diffeomorphism,  there is a thin strip in $\mathcal A$ around $\Gamma_i$ on which $N$ is injective and its image by $N$ lies on the same component of $\Bbb S^2\setminus N(\Gamma_i).$ We denote by $D_i$ the component of $\Bbb S^2\setminus N(\Gamma_i)$ which does not contain this image. 

We let  now $\widetilde{\mathcal A}$ denote the (topological) sphere obtained by attaching  a copy of $D_i$ to $\mathcal A$ using the diffeomorphism
  $F |_{\Gamma_i}: \Gamma_i\to \partial D_i,$ for  $i=1,2.$  (see, for instance, Theorem 9.29 in \cite{lee} for the details of such a construction). We define 
$\widetilde N: \widetilde{\mathcal A} \to \Bbb S^2$ by
\begin{equation*}
\widetilde N =\begin{cases} N\qquad &{\text{on}}\quad \mathcal A\\
\text{identity }\quad &{\text{on}}\quad D_i, i=1,2.
\end{cases}
\end{equation*}

Note that $\widetilde N$ is well defined since, by construction of   $\widetilde{\mathcal A},$ each  $p\in \Gamma_i$ is identified with $F(p)\in\partial D_i$.  Observe now that $\widetilde N$ is a local homeomorphism. This is clear for points not on $\partial D_i, i=1,2,$ and for points on $\partial D_i, i=1,2,$ this follows from the choice of the $D_i$'s. So $\widetilde N$  is a covering map  and  is therefore
 a global homeomorphism onto $\Bbb S^2 .$ This shows $N$ is a diffeomorphism onto $N(\mathcal A).$ 

We will now show $F$ is an embedding. First, we consider the support function of the immersion, $v=\langle F,N\rangle,$ defined on $ \mathcal A.$ It is a well known fact that $v$ is a Jacobi function, that is, it satisfies the equation: $\Delta_{\mathcal A} v -2K v=0,$ where $\Delta_{\mathcal A}$ is the Laplacian on $\mathcal A$ of the metric induced by the immersion $F$ in $\Bbb R^3.$ Also, by  the free boundary condition $v=0$ on $\partial\mathcal A.$ Now, as $N$ is a diffeomorphism onto $\Omega:=N(\mathcal A)\subset \Bbb S^2,$ 
the  function $ u := v\circ N^{-1}$ verifies: 
\begin{align*} \Delta u +2u &=0\qquad \text{in}\quad \Omega\\
u&=0 \qquad \text{on}\quad \partial\Omega,
\end{align*}
where $\Delta$ is the Laplacian on $\Bbb S^2.$

It follows from Lemma \ref{sign} that $u$ and hence  the support  function $v=\langle F, N\rangle$ does not vanish in the interior of $\mathcal A.$ Up to changing the orientation of $\mathcal A,$ we may assume $v>0$ in int$(\mathcal A).$ 

We now consider, as in \cite{RV},  the radial projection on $\Bbb S^2,$ that is, the mapping:
$X: \mathcal A \longrightarrow \Bbb S^2$ defined by 
$$ X(p)= \frac{F(p)}{|F(p)|},\quad p\in \mathcal A.$$
$F$ is well defined as $v$ has no zeros on int$(\mathcal A)$ and $X=F$ on $\partial \mathcal A.$ We now check that $X$ is a local diffeomorphism on int$(\mathcal A).$ Let $p\in$
 int$( \mathcal A)$ and 
$w\in T_p\Sigma,$ we have 

\begin{equation}\label{derivative}  dX_p(w)= \frac{dF_p(w)}{|F(p)|} - \frac{\langle dF_p(w),F(p)\rangle}{|F(p)|^3} F(p).
\end{equation}

If $dX_p(w)=0,$ we deduce that 
$$0= \frac{\langle dF_p(w), N(p)\rangle}{|F(p)|} =\frac{\langle dF_p(w),F(p)\rangle}{|F(p)|^3} \langle  F(p), N(p)\rangle.$$
As $\langle  F(p), N(p)\rangle\neq 0,$ we have $\langle dF_p(w),F(p)\rangle=0.$ Going back to (\ref{derivative}),
we get $dF_p(w)=0$ and thus $w=0.$ This shows $X$ is a local diffeomorphism on int$(\mathcal A).$ 

We now analyze the behaviour of $F$ at the boundary points. Denote by $\nu$ the exterior unit normal field to $\partial\mathcal A$ in $\mathcal A$ for the metric induced by the immersion. Let $p\in\partial\mathcal A$ and $\gamma: (-\epsilon,0]\to \mathcal A$ an arclength  parametrized curve satisfying $\gamma(0)=p,\, 
\dot\gamma (0)=\nu(p)$ and set $\overline\gamma=X\circ\gamma.$ We have $\langle X(\gamma(0)),N(p)\rangle=\langle F(p),N(p)\rangle=0$ and using (\ref{derivative}), one checks directly that
\begin{align*}
\frac{d}{dt}|_{0} \langle X(\gamma(t)),N(p)\rangle &=0\\
\frac{d^2}{dt^2}|_{0} \langle X(\gamma(t)),N(p)\rangle &= \langle \ddot{\overline{\gamma}}(0), N(p)\rangle = \text{II}(\nu(p),\nu(p))
\end{align*}
As $\nu$ is a principal direction, we have $|\text{II}(\nu,\nu)|= \sqrt{-K}\neq 0.$ Therefore,  $\text{II}(\nu,\nu)$ has a sign along 
$\Gamma_i,\, i=1,2.$ So for $t$ small, $X(\gamma(t))$ lies in the component of $\Bbb S^2\setminus F(\Gamma_i)$ which has $N(p)$ 
as outwards (resp. inwards) pointing normal at $F(p)$ if $\text{II}(\nu,\nu)>0$ (resp. if $\text{II}(\nu,\nu)<0).$ It follows that there is a thin strip in the interior of $\mathcal A$ around $\Gamma_i$ which projects 
on this component. We call $D_i$ the component of  $\Bbb S^2\setminus F(\Gamma_i)$ which does not intersect this projection.

 As before, we let
 $\widetilde{\mathcal A}$ denote the (topological) sphere obtained by attaching a copy of $D_i$ to $\mathcal A$ using the diffeomorphism $F|_{\Gamma_i}: \Gamma_i\to \partial D_i,$ for $i=1,2$   (again, one can see  Theorem 9.29 in \cite{lee} for the details of such a construction). 

Define 
$\widetilde X: \widetilde{\mathcal A} \to \Bbb S^2$ by
\begin{equation*}
\widetilde X =\begin{cases} X\qquad &{\text{on}}\quad \mathcal A\\
\text{identity }\quad &{\text{on}}\quad D_i, i=1,2.
\end{cases}
\end{equation*}

Note that $\widetilde X$ is well defined since, by construction of   $\widetilde {\mathcal A},$ each  $p\in \Gamma_i$ is identified with $X(p)=F(p)\in\partial D_i$. By its very definition and the choice of the $D_i$'s,   $\widetilde X$ is a local homeomorphism. Therefore
$\widetilde X$ is a covering of $\Bbb S^2$ and thus a global homeomorphism. If follows that $F$ is an embedding and we conclude by  uniqueness of the critical catenoid \cite{CG, PZ}.
\end{proof}

\section{An overdetermined eigenvalue problem on the sphere}\label{sec:overdetermined}

As we indicated in the introduction,  it was observed in \cite{S}, that free boundary minimal surfaces of genus zero in $\Bbb B^3$ are closely related to an overdetermined eigenvalue problem on domains of $\Bbb S^2.$ More precisely, let $\Omega\subsetneq \Bbb S^2$ be a smooth domain and suppose there exists a smooth function $u$ on $\Omega$ verifying 
\begin{equation}\label{two}
\Delta u+ 2u = 0 \quad \text{in} \quad \Omega
\end{equation}

Then, if $u$ is not the restriction of a linear function  on $\Bbb R^3$ to $\Omega,$ the map
\begin{align*} X_u&: \Omega\longrightarrow \Bbb R^3\\
X_u(p)&=\nabla u (p) + u(p).p
\end{align*} 
is  not constant, harmonic and almost conformal and so defines a branched minimal surface (cf. \cite{R}). Here $\nabla u$ denotes the gradient of $u$ on $\Omega$ viewed as a vector in $\Bbb R^3.$  Moreover, the Gauss map of $X_u$ is the identity: $N(p)=p,$ for all $p\in\Omega.$ 

Suppose that, in addition, $u$ is smooth up to the boundary and satisfies the following boundary conditions:
\begin{equation}\label{boundary}
u=0  \quad \text{and} \quad
|\nabla u|=1 \quad \text{on}\quad \partial\Omega.
\end{equation}

The key observation (cf. \cite{S}) is  that $X_u$ defines a branched minimal surface with free boundary in $\Bbb B^3.$ Indeed,
for $p\in\partial\Omega,$ we have  
$$
|X_u(p)|^2 =|\nabla u(p)|^2+u^2(p)=1\,\text{ and} \, \langle X_u(p),N(p)\rangle =\langle \nabla u(p),p\rangle=0$$
and by the maximum principle $X_u(\text{int}(\Omega)) \subset \text{int}(\Bbb B^3).$
Conversely, if $\Sigma$ is a free boundary minimal surface in $\Bbb B^3$ with injective Gauss map, then its image by the Gauss map in 
$\Bbb S^2$ supports a smooth function verifying (\ref{two}) and (\ref{boundary}), namely the support function of the immersion parameterized by the inverse of the Gauss map. Applying this to the critical catenoid, we obtain the rotational annulus 
 $$ \{(x,y,z)\in \Bbb S^2,\, |z|\leq 1/L\},$$
  where $L$ is the unique positive solution to the equation $L=\coth L$ and 
 a function solving the problem is given by the restriction of the function 
 $$ \frac{1}{L\coth L}\left( 1- z\operatorname{arctanh} z \right).$$
 
 We will now prove this is the only such domain  of annular type, up to rotations  :
 %%%%%%%%%%%%%%%%%%%%%%%
\begin{theorem}\label{annulus} Let $\Omega$ be a $\mathcal C^{2,\alpha}$ domain in $\Bbb S^2$ having the topology of an annulus. Suppose there exists a function $u\in \mathcal C^2(\Omega)\cap \mathcal C^1(\overline \Omega)$  solution to  the overdetermined  problem:

\begin{equation}\label{problem}
\begin{cases} \Delta u+ 2u = 0 \quad \text{in} \quad \Omega\\
\qquad \quad u=0  \quad \text{on}\quad\partial\Omega\\
\quad\,\,\,\, |\nabla u|=1 \quad \text{on}\quad \partial\Omega.
\end{cases}
\end{equation}

Then $\Omega$ is, up to rotation, the rotational annulus
 $$  \{(x,y,z)\in \Bbb S^2,\, |z|\leq 1/L\},$$
  where $L$ is the unique positive solution to the equation $L=\coth L$ and 
  $u$ is, up to sign,  the restriction of the function  
 $$ \frac{1}{L\coth L}\left( 1- z\operatorname{arctanh} z \right).$$
   \end{theorem}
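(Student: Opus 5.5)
The plan is to pass from $\Omega$ to a branched free boundary minimal annulus via the map $X_u=\nabla u+u\,\mathrm{Id}$, and then invoke Theorem \ref{embedding}.

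\emph{Regularity and the minimal annulus.} Since $\Omega$ is $\mathcal C^{2,\alpha}$ and $u=0$ on $\partial\Omega$ with $\Delta u+2u=0$, elliptic boundary regularity gives $u\in\mathcal C^{2,\alpha}(\overline\Omega)$. By Lemma \ref{sign}, $u$ does not vanish in $\mathrm{int}(\Omega)$; replacing $u$ by $-u$ if necessary, I assume $u>0$. The constant-gradient condition, combined with the overdetermined structure, should then bootstrap to give smoothness up to the boundary. If that fails, I would work with the minimal surface directly, since $X_u$ is harmonic and its free boundary regularity is standard.

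Next, $u$ is not the restriction of a linear function. Indeed, a linear function vanishing on $\partial\Omega$ vanishes on a great circle, and $\partial\Omega$ would then be contained in that great circle. That is impossible for an annulus with two boundary components, each a Jordan curve, since the two components would have to coincide. Hence $X_u:\Omega\to\Bbb B^3$ is a branched minimal surface with free boundary, as recalled above, and it is parametrized by an annulus. Uniformizing, $\Omega$ is conformal to a ring $\mathcal A$. The first part of Theorem \ref{embedding} then shows that $X_u$ has no branch points.

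\emph{Embeddedness of the boundary components.} On $\partial\Omega$, since $u=0$ and $|\nabla u|=1$, we have $X_u(p)=\nabla u(p)=\pm\eta(p)$, where $\eta$ is the unit normal to $\partial\Omega$ in $\Bbb S^2$. Because $u>0$ inside, $X_u=-\eta_{\rm out}$ uniformly, where $\eta_{\rm out}$ is the outward conormal. So on each boundary curve $\Gamma$ of $\Omega$, $X_u|_\Gamma$ is the inward normal map of the curve $\Gamma\subset\Bbb S^2$, that is, the curve polar to $\Gamma$.

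The main obstacle is to show that this polar curve is embedded. I would first establish that $\Gamma$ is strictly locally convex. The argument in the proof of Theorem \ref{embedding} shows that the image boundary curve has geodesic curvature $\sqrt{-K}\neq0$. Equivalently, the derivative of the normal map along $\Gamma$ is $\kappa_g(\Gamma)$ times the tangent, and it cannot vanish because $X_u$ is an immersion. So $\Gamma$ is an embedded closed curve with nonvanishing geodesic curvature, and by \cite{F} it is convex and lies in an open hemisphere. The polar (normal-map) curve of a strictly convex embedded curve is again a strictly convex embedded curve, since the normal direction is monotone. Therefore each boundary component of $X_u(\Omega)$ is embedded.

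\emph{Conclusion.} Theorem \ref{embedding} now shows that $X_u(\Omega)$ is, up to rotation, the critical catenoid. Its Gauss map is the identity on $\Omega$, so $\Omega=N(X_u(\Omega))$ is the Gauss image of the critical catenoid, namely $\{|z|\le 1/L\}$. Finally, $u=\langle X_u,N\rangle$ is the support function, which yields the explicit formula up to sign.
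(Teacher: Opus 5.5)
Your proposal is correct and follows essentially the same route as the paper. You build $X_u=\nabla u+u\,\mathrm{Id}$, rule out branch points with the Hopf-function argument, show each component of $\partial\Omega$ is an embedded curve with nonvanishing geodesic curvature (hence convex in an open hemisphere), deduce that its polar curve $X_u|_{\partial\Omega}$ is embedded, and conclude by the uniqueness of the critical catenoid. The differences are minor: you obtain $\kappa_g(\partial\Omega)\neq 0$ directly from injectivity of $dX_u$ along the boundary rather than through the polar-curvature relation $1/\kappa_g$, and you recover $u$ as the support function rather than from simplicity of $\lambda_1^D(\Omega)=2$. The regularity bootstrap you hedge on is exactly the Kinderlehrer--Nirenberg theorem the paper cites.
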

   \medskip
\begin{proof} It follows from the regularity results in \cite{KN} that $\partial\Omega$ is analytic and by elliptic regularity $u$ is analytic in
 $\overline\Omega.$
Clearly $u$ is not the restriction to $\Omega$ of a linear map, so, as we recalled above, the map
\begin{align*} X_u&: \Omega\longrightarrow \Bbb R^3\\
X_u(p)&=\nabla u (p) + u(p).p
\end{align*} 
defines a branched minimal annulus with free boundary in $\Bbb B^3$ with the identity as Gauss map.  The argument in the proof of Theorem \ref{embedding}
shows that $X_u$ is free of branch points and  its Gauss curvature does not vanish. We will show that $X_u$ is an embedding on each of its boundary components. Let $\gamma$ be a parametrization  of a component of $\partial\Omega$  and set $\overline\gamma=X_u\circ\gamma.$  Again, by the argument in the proof of Theorem \ref{embedding},
 the geodesic curvature of  $\overline\gamma$ in $\Bbb S^2$ verifies  $|\kappa_g|=\sqrt{-K}\neq 0.$ So $\overline\gamma$   is a locally convex curve in $\Bbb S^2.$ To see that $\overline\gamma$ is embedded, observe that  its polar curve  in $\Bbb S^2,$  which  is the curve $N\circ\gamma=\gamma,$ is also locally convex (its geodesic curvature is $1/\kappa_g,$ see for instance \cite{BD}).   So $\gamma$ is an embedded  locally convex curve in $\Bbb S^2,$    it is therefore contained in an open hemisphere and is globally convex (cf. \cite{F}). It follows, in turn, that  
its  polar curve, which  is $\overline\gamma,$ is also embedded.

We can thus  apply Theorem \ref{embedding} to conclude that $X_u(\Omega)$ is, up to rotation, the critical catenoid and $\Omega$ 
is its image by the Gauss map. Moreover  $u$ is unique up to sign since $\lambda^D_1(\Omega)=2$ (cf. Lemma \ref{sign}),. 
\end{proof}
\medskip
\begin{remark} The correspondence, through the support function,  between solutions to the problem (\ref{problem}) and (branched) free boundary minimal surfaces of genus zero in $\Bbb B^3$ suggested a possible way to prove the uniqueness of the critical catenoid by proving first Theorem \ref{annulus}. An attempt using this approach  was conducted by Espinar and Marin \cite{EM} and lead to a proof of the uniqueness of the critical catenoid under an additional hypothesis.
\end{remark}
\bigskip
In their recent work, Hines, Colesar and McGrath \cite{HCM} proved the existence, for any large enough integer $m,$ of a domain in $\Bbb S^2$  having $m$ boundary components and supporting a solution to the overdetermined eigenvalue problem (\ref{problem}). This gives rise to free boundary branched minimal surfaces in $\Bbb B^3$  and  naturally raises the question  whether  some of these examples can be free of branch points. It turns out the answer is negative. Indeed, we have the following 
\begin{theorem} 
Let $\Sigma$ be an  immersed minimal surface with free boundary in $\Bbb B^3.$ Suppose its Gauss map is injective. Then $\Sigma$ is, up to a rotation,   the critical catenoid.
\end{theorem}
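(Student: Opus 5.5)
The plan is to reduce the statement to Theorem \ref{annulus}, using the correspondence between free boundary minimal surfaces with injective Gauss map and the overdetermined problem (\ref{problem}). Write $\Sigma$ as an immersion $F:\Sigma\to\Bbb B^3$ of a compact surface with boundary and Gauss map $N$. Since $N$ is injective, the surface is orientable and $N$ is a continuous injection of the compact surface $\Sigma$ into $\Bbb S^2$, hence a homeomorphism onto $\Omega:=N(\Sigma)$. Since $\Omega$ is a proper compact subset of $\Bbb S^2$ with nonempty boundary, $\Sigma$ has genus zero.

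Next I would show that $N$ is a local diffeomorphism, so that $\Omega$ is a smooth domain with $\partial\Omega=N(\partial\Sigma)$. For this I would use the Hopf differential argument of Theorem \ref{embedding}. That argument was written for the annulus, where the Hopf function has the form $h=a/z^2$. For a general genus-zero domain I would argue directly instead. The Gauss map of a minimal surface is antiholomorphic, so its critical points are isolated in the interior, and a map with an interior critical point of order $\ge 1$ is locally $k$-to-one with $k\geq 2$, which contradicts injectivity. At the boundary, the free boundary condition makes each boundary curve a line of curvature with $|\kappa_g|=|\text{II}(\dot\gamma,\dot\gamma)|=\sqrt{-K}$. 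A boundary zero of $K$ would be a boundary branch point of $N$, where $N$ behaves like $z\mapsto z^k$ with $k\ge2$ on a half-disk. The boundary reflection structure, with $\Im(z^2h)=0$ along a boundary arc in a local conformal half-disk coordinate, forces the image of the half-disk to cover an angle $k\pi>\pi$, again contradicting injectivity. So $N:\Sigma\to\Omega$ is a diffeomorphism. This boundary step is the one I expect to be the main obstacle, and I would treat it carefully via Schwarz reflection of $g$ across the boundary.

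Then, as explained before Theorem \ref{annulus}, $u:=\langle F,N\rangle\circ N^{-1}$ solves $\Delta u+2u=0$ in $\Omega$, with $u=0$ and $|\nabla u|=1$ on $\partial\Omega$ by the free boundary condition (indeed $X_u=F\circ N^{-1}$). By Lemma \ref{sign}, $2=\lambda_1^D(\Omega)$ on each component, and $\Omega$ is connected. The remaining task is to show that $\Omega$ is an annulus. The count of boundary components must come from a topological or integral identity. My first attempt would be Gauss--Bonnet on $\Sigma$: the total curvature equals $-\text{Area}(\Omega)$ by injectivity, and the boundary geodesic curvature term is $\int_{\partial\Sigma}\kappa=\text{Length}(\partial\Sigma)$ for free boundary surfaces in the ball. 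Combining this with Gauss--Bonnet on $\Omega\subset\Bbb S^2$, where the geodesic curvature of $\partial\Omega$ is $1/\kappa_g$ of $F(\partial\Sigma)$ (the polar relation), gives relations among $\chi$, areas and lengths.

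An alternative route uses the fact that $\partial\Omega$ consists of the polar curves of the convex curves $F(\Gamma_i)$, each locally convex. Each boundary component $\gamma_i$ of $\Omega$ is then an embedded locally convex curve, hence convex, and so is $F(\Gamma_i)$. Since $\Omega$ lies on one side of each $\gamma_i$, and the sign of $\text{II}(\nu,\nu)$ controls which side, two disjoint convex complementary disks are forced. I would argue that every complementary disk $D_i$ of $\Omega$ is convex, and that the tangent-plane geometry ($X_u$ pointing outward, $F$ meeting the sphere orthogonally) forces each $D_i$ to contain the antipode of points of the others. This would leave room for at most two such disks. The case of one disk is excluded by Nitsche's theorem, since the disk case gives the equatorial disk, whose Gauss map is constant. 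Hence $\Omega$ is an annulus, and Theorem \ref{annulus} identifies $F(\Sigma)$ with the critical catenoid.
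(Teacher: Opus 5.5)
\textbf{There is a genuine gap.} Once Nitsche's theorem and Theorem \ref{annulus} are available, the only substantive step is to show that $\Sigma$ has at most two boundary components, and your proposal never proves this.

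In your first route, Gauss--Bonnet on $\Sigma$ gives $|\partial\Sigma|-\text{Area}(\Omega)=2\pi(2-r)$. The bound $\text{Area}(\Omega)<4\pi$ then only yields $|\partial\Sigma|<2\pi(4-r)$, which says nothing about $r$ without a lower bound on the boundary length. That lower bound is the missing idea, and it is what the paper uses: the Ros--Vergasta inequality $|\partial\Sigma|\ge 2\pi$. Equivalently, since $|\partial\Sigma|=2\,\text{Area}(\Sigma)$, this is the area bound $\text{Area}(\Sigma)\ge\pi$. It gives $2\pi(2-r)>-4\pi+2\pi$, i.e.\ $r<3$, and Nitsche then removes $r=1$.

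Your second route is not an argument. Nothing you state shows that each complementary disk contains antipodes of points of the others. Even granting that, the property does not limit the number of disks to two.

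Your first route can also be closed without Ros--Vergasta, but only by finishing the boundary computation you allude to. You have the polar relation; what you omit is the speed of the polar curve and the sign of the inward normal. Let $\bar\gamma=F\circ\gamma$ be parametrized by arclength, with $T=\bar\gamma'$ and $\kappa=\langle T',N\rangle$. The frame $(\bar\gamma,T,N)$ satisfies $T'=-\bar\gamma+\kappa N$ and $N'=-\kappa T$. So $N\circ\gamma$ has speed $|\kappa|$ and geodesic curvature $1/\kappa$ with respect to the normal $\bar\gamma$. Hence $k_{\partial\Omega}\,d\sigma=\epsilon_i\,ds$ on the $i$-th component, with $\epsilon_i=\pm1$.

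Adding Gauss--Bonnet on $\Omega$, namely $\text{Area}(\Omega)+\sum_i\epsilon_i|\Gamma_i|=2\pi(2-r)$, to the identity on $\Sigma$ gives $4\pi(2-r)=\sum_i(1+\epsilon_i)|\Gamma_i|\ge 0$, so $r\le 2$. In fact $dN(-\nu)=-\kappa\bar\gamma$, so the inward normal of $\Omega$ is $-\operatorname{sgn}(\kappa)\bar\gamma$. Thus every $\epsilon_i=-1$, and one gets $\chi=0$ directly.

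This version requires $N$ to be a diffeomorphism up to the boundary, so that $\kappa\neq0$ on $\partial\Sigma$ and $\Omega$ is a smooth domain. Your Step 2 therefore becomes essential. There, the boundary argument needs finite-order vanishing of $g'$, which requires real analyticity up to $\partial\Sigma$. Also, the inequality that contradicts injectivity is $k\pi\ge 2\pi$, not $k\pi>\pi$.

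In the paper's ordering Step 2 is unnecessary. Once $r=2$, the identity $fg'=-a/z^2$ from the proof of Theorem \ref{embedding} already makes $N$ a local diffeomorphism on the closed annulus.
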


\begin{proof} By assumption $\Sigma$ has genus zero. Call $r$  the  number of its boundary components and let $\gamma$ be a parametrization by arclength of a boundary component.  As $\Sigma$ has free boundary in 
$\Bbb B^3,$ the position vector coincides with the exterior unit normal to $\partial\Sigma$ in $\Sigma.$ So the geodesic curvature of $\gamma$ in $\Sigma$ verifies: $\kappa_g=\langle \ddot\gamma,-\gamma\rangle= \langle\dot\gamma,\dot\gamma\rangle=1.$
Denoting by $|\partial\Sigma|$ the length of $\partial\Sigma,$ we have by Gauss-Bonnet formula:
$$\int_{\Sigma} K + |\partial\Sigma| =2\pi(2-r),$$
$K$ being the Gauss curvature of $\Sigma.$ As $N$ is injective, $\int_{\Sigma} K>-4\pi$ and by a result in \cite{RV} 
$$|\partial\Sigma|\geq 2\pi.$$
It follows that 
$$ r<3$$
Thus $r=1$ or $2.$ By Nitsche's Theorem \cite{N}, if $r=1,\,\Sigma$ must be an  equatorial  disk and the Gauss map is then a constant.
Hence  $\Sigma$ is an annulus with injective Gauss map. Parameterizing $\Sigma$ by the inverse of its Gauss map, we get an annulus 
in $\Bbb S^2$ verifying the hypotheses of Theorem \ref{annulus} and therefore $\Sigma$ coincides, up to a rotation, with the critical catenoid.

\end{proof}

 \end{document}